\newtheorem{theorem}{Theorem}
\newtheorem{lemma}{Lemma}
\numberwithin{equation}{section}
\newcommand{\N}{\mathbb{N}}
\newcommand{\R}{\mathbb{R}}
\newcommand{\HH}{\mathcal{H}}
\newcommand{\eps}{\varepsilon}
\newcommand{\BV}{\mathrm{BV}}
\newcommand{\D}{\mathcal{D}}
\newcommand{\dbtilde}[1]{\accentset{\approx}{#1}}
\DeclareMathOperator{\supp}{supp}
\DeclareMathOperator{\sign}{sign}
\title{Schur property for jump parts of gradient measures}
\author{Krystian Kazaniecki}
\address{Krystian Kazaniecki, Institute of Analysis, Johannes Kepler University Linz, Alternberger Strasse 69, A-4040 Linz, Austria, \and Institute of Mathematics, University of Warsaw, Stefana Banacha 2, 02-097 Warszawa, Poland}
\email{krystian.kazaniecki@jku.at}
\author{Anton Tselishchev}
\address{Anton Tselishchev, Institute of Analysis, Johannes Kepler University Linz, Alternberger Strasse 69, A-4040 Linz, Austria}
\email{celis-anton@yandex.ru}
\author{Micha{\l}  Wojciechowski}
\address{Micha{\l}  Wojciechowski, Institute of Mathematics, Polish Academy of Sciences,  Jana i Jedrzeja \'Sniadeckich 8, 00-656 Warszawa, Poland}
\email{miwoj@impan.pl}
\thanks{This research was supported by the National Science Centre, Poland, and Austrian Science Foundation FWF joint CEUS programme. National Science Centre project no. 2020/02/Y/ST1/00072 and FWF project no. I5231}
\keywords{Bounded variation, gradient measures, weak convergence}
\subjclass[2020]{26B30, 46E35}
\begin{document}
	
	\begin{abstract}
		We consider weakly null sequences in the Banach space of functions of bounded variation $\BV(\R^d)$. We prove that for any such sequence $\{f_n\}$ the jump parts of the gradients of functions $f_n$ tend to $0$ strongly as measures. It implies that Dunford--Pettis property for the space $\mathrm{SBV}$ is equivalent to the Dunford--Pettis property for the Sobolev space $W^{1,1}.$
	\end{abstract}
	
	\maketitle
	
	\section{Introduction}
	
	\subsection{Background and motivation}
	
	The goal of this paper is to study the weak compactness in the Banach space of functions of bounded variation. Our motivation was to better understand the properties of weakly null sequences in this space which by the Eberlein--Smulian theorem determined weak compactness. This is an inevitable step towards establishing Dunford--Pettis property (or its absence), which is our ultimate goal. Since this space is a closed subspace of the space of measures with finite total variation, it has well known description of weak convergence but it turns out that the special gradient structure yields a certain new phenomenon.
	
	Let us describe it without going too deep into details. Roughly speaking, every gradient measure (i.e., the gradient of a function of bounded variation) could be uniquely decomposed as a sum of absolutely continuous, jump (absolutely continuous with respect to Hausdorff measure of codimension one) and Cantor parts (note that these three parts are usually not gradient measures themselves). The main result of this paper (Theorem \ref{main}) states that jump parts of weakly null sequences tend to 0 in norm. This resembles the well known Schur property enjoyed e.g. by the space $\ell^1$ (see \cite{Schur} for the original theorem and also \cite[Theorem III.A.9]{Wojt} or \cite[Theorem 2.3.6]{AlKal} for a more modern presentation).
	It explains why we allow ourselves to extend the term "Schur property" to our context.
	
	The structure of gradient measures and their weak* convergence are well understood (see e.g. the remarkable Alberti's paper \cite{Alberti} and also \cite{PelWoj}, \cite[Chapter 3]{AFP2000}). On the other hand, a little is known about the geometry of the dual to the space of functions of bounded variation. For recent results in this direction see e.g. \cite{BV_cont_hyp, Dual_BV_1, Dual_BV_2}.
	
	Our proof has geometric and combinatorial flavour. An elementary argument reduces the problem to studying the gradient measures with jump parts concentrated on fixed Lipschitz graphs.
	Under the assumption that the statement fails, oscillation of the densities of jump parts appears.
	This in turn yields high oscillation of the functions in the neighbourhood of the aforementioned Lipschitz graphs which leads to a contradiction. We present the proof for the two-dimensional case and pass to the case of arbitrary dimension in the final section.
	
	\subsection{Basic definitions and formulation of the main result}
	
	For an arbitrary domain $\Omega\subset \R^d$ the space $\BV(\Omega)$ is the space of functions $u$ in $L^1(\Omega)$ such that their distributional gradient $D u$ is a (vector-valued) measure. The norm on this space is defined in a following way: $\|u\|_{\BV(\Omega)}=\|u\|_{L^1(\Omega))}+\|Du\|$. Here and everywhere below for any measure $\mu$ (real or vector-valued) notation $\|\mu\|$ stands for its total variation. We will mainly work with the space $\BV(\R^d)$ and we use notation $\BV=\BV(\R^d)$.
	
	We present necessary definitions and facts about $\BV$ functions here (which are essentially well known); all of them are taken from \cite[Chapter 3]{AFP2000}.
	
	The gradient $Du$ of any function $u\in\BV$ can be written as a sum of its absolutely continuous and singular parts: $Du = D^a u + D^s u$. The singular part can be further decomposed as the sum of the Cantor and jump parts. We proceed with the description of this decomposition. 
	
	We denote by $B_\rho (x)$ the ball in $\R^d$ of radius $\rho$ with center at the point $x$. For any vector $\nu$ (say, of unit length) we will also use the following convenient notation for two halves of the ball $B_\rho (x)$:
	\begin{align*}
		B_\rho^+ (x,\nu) = \{y\in B_\rho(x):(y-x)\cdot \nu > 0\};\\
		B_\rho^- (x,\nu) = \{y\in B_\rho(x):(y-x)\cdot \nu < 0\}.
	\end{align*}
	The set $J_u$ is now defined as the set of all approximate jump points of $u$, that is, $x\in J_u$ if there exist numbers $a\neq b$ and the unit vector $\nu$ such that
	\begin{align*}
		\lim_{\rho\to +0}\frac{1}{|B_\rho^+ (x,\nu)|}\int_{B_\rho^+ (x,\nu)} |u(y)-a|\, dy=0; \\ \lim_{\rho\to +0}\frac{1}{|B_\rho^- (x,\nu)|}\int_{B_\rho^- (x,\nu)} |u(y)-b|\, dy=0.
	\end{align*}
	Here the triple $(a, b, \nu)$ is uniquely determined by these conditions up to a permutation of $a$ and $b$ and change of sign of the vector $\nu$. We denote $\nu=\nu_u(x)$, $u^+ (x)=a$, $u^-(x)=b$. The ``jump part'' of $Du$ is defined as $D^j u = (D^s u)|_{J_u}$. We will also call the set $J_u$ a ``jump set'' of a $BV$ function $u$. The following identity holds for a jump part of the gradient (see \cite[Theorem 3.77]{AFP2000}):
	$$
	D^j u = (u^+-u^-)\otimes\nu_u\HH^{d-1}|_{J_u}.
	$$
	
	We say that $u$ has an approximate limit in $x\in\R^d$ if there exists $z\in\R$ such that
	$$
	\lim_{\rho\to 0+} \frac{1}{|B_\rho (x)|}\int_{B_\rho (x)}|u(y)-z|\, dy = 0.
	$$
	The set of all points which do not satisfy this property is called an approximate discontinuity set and denoted by $S_u$. The set $S_u$ is countably $\HH^{d-1}$-rectifiable, i.e. it means that up to $\HH^{d-1}$-negligible set it is covered by $\bigcup_{k\in\N} \Phi_k ([0,1]^{d-1})$ where $\Phi_k:[0,1]^{d-1}\to\R^d$ are Lipschitz functions. Besides that, we have $\HH^{d-1}(S_u\setminus J_u)=0$ (see \cite[Theorem 3.78]{AFP2000}). The Cantor part of the gradient is defined as $D^c u = (D^s u)|_{\R^d\setminus S_u}$. For simplicity, we may assume that $u$ is a ``precise representative'' (for the definition see \cite[p.197]{AFP2000}) of an element of $\BV$, i.e. 
		\[
		u(x)=\lim_{\rho\to 0+} \frac{1}{|B_\rho (x)|}\int_{B_\rho (x)}u(y)\, dy
		\]
		for any $x\in\mathbb{R}^d\backslash S_u$.
		We define the functions $u^+$ and $u^-$ $\HH^{d-1}$-a.e. simply by putting $u^+=u^-:=u$ outside the set $S_u$. Then for all $x\in \R^d\backslash S_u$ and any choice of the unit vector $\nu$ we have we have
		\begin{align*}
			\lim_{\rho\to +0}\frac{1}{|B_\rho^+ (x,\nu)|}\int_{B_\rho^+ (x,\nu)} |u(y)-u^+(x)|\, dy=0; \\ \lim_{\rho\to +0}\frac{1}{|B_\rho^- (x,\nu)|}\int_{B_\rho^- (x,\nu)} |u(y)-u^-(x)|\, dy=0.
	\end{align*}
	
	We will use the following fact: for $u\in\BV(\Omega)$ there exists a sequence of smooth functions $u_n$ such that $u_n\to u$ in $L^1(\Omega)$ and $\|Du_n\|_{L^1(\Omega)}\to \|Du\|$. This convergence is called ``strict convergence'' (see \cite[Theorem 3.9]{AFP2000}).
	
	From above definitions (and the fact that $D u$ vanishes on $\HH^{d-1}$-negligible set $S_u\setminus J_u$) it follows that if $u$ is a $BV$ function then its gradient has the following canonical decomposition:
	\[
	Du=D^a u + D^c u + D^j u.
	\]
	Our main theorem states that if a sequence of functions $f_n$ converges \emph{weakly} in the space $\BV$ then the jump parts of the gradients of these functions converge \emph{strongly} (as measures).

	\begin{theorem}\label{main}
		Let $\{f_n\}_{n\in\N}$ be a sequence of functions in $\BV$. If $\{f_n\}$ converges weakly in $\BV(\R^d)$ to a function $f$ then
		\[
		\lim\limits_{n\rightarrow\infty}\|\left(D^j(f-f_n)\right)\| = 0.
		\]
	\end{theorem}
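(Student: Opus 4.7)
The plan is to argue by contradiction. Replace $f_n$ by $f_n-f$ so the weak limit is zero; by Banach--Steinhaus the sequence is bounded in $\BV$. Suppose the conclusion fails: after passing to a subsequence, $\|D^j f_n\|\ge \delta>0$ for all $n$. Following the introduction I treat the two-dimensional case, the passage to higher dimensions being deferred to the final section.

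\textbf{Reduction to a fixed Lipschitz graph.} Each jump set $J_{f_n}$ is countably $\HH^1$-rectifiable and is therefore covered (up to $\HH^1$-null) by countably many Lipschitz graphs. By tightness of the gradient measures I may first truncate to a large bounded region, losing only an arbitrarily small fraction of the jump mass. A pigeon-hole / diagonal extraction across a fixed countable family of Lipschitz graphs then yields, after a further subsequence, a single Lipschitz graph $\Gamma$ carrying a uniformly positive fraction of the jump mass of every $f_n$. Composing with a bi-Lipschitz straightening that is a $\BV$-isomorphism, I reduce to $\Gamma=\R\times\{0\}$.

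\textbf{Oscillation of the trace densities.} Let $g_n(x)=f_n^+(x,0)-f_n^-(x,0)$, so $\|g_n\|_{L^1(\R)}\ge c\delta$. By the $\BV$ trace theorem, each functional $T_\psi(u)=\int_\R (u^+(x,0)-u^-(x,0))\psi(x)\,dx$ with $\psi\in C_c^\infty(\R)$ is bounded on $\BV$, so weak convergence to $0$ in $\BV$ forces $\int g_n\psi\to 0$ for every such $\psi$. Combined with the $L^1$ lower bound, $g_n\to 0$ as a finite Radon measure on $\R$ while remaining bounded below in $L^1$; hence the mass of $g_n$ must concentrate on increasingly fine oscillation scales.

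\textbf{Propagation to $f_n$ and the contradiction.} The slicing / Fubini structure of $\BV$ identifies $g_n(x)$ with the $L^1$-limit of $f_n(x,\rho)-f_n(x,-\rho)$ as $\rho\to 0^+$, up to an error on the strip $\R\times[-\rho,\rho]$ controlled by the tangential variation of $f_n$ there. The fine-scale oscillation of $g_n$ thus translates into high tangential oscillation of $f_n$ in arbitrarily narrow strips around $\Gamma$. The core combinatorial step, and the main obstacle, is to exploit this — via a decomposition of $g_n$ into sign-aligned scale blocks — to construct a single $T\in\BV^*$ (built from averages of $f_n$ against a fixed sign pattern on symmetric strips around $\Gamma$) with $T(f_n)\not\to 0$, contradicting weak convergence. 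Producing such a \emph{fixed} functional out of the $n$-dependent oscillation patterns is the Schur-type combinatorial heart of the argument and justifies the terminology of the title.
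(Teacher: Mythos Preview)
Your outline captures the overall shape --- reduce to a single Lipschitz graph, straighten, exploit oscillation of the trace --- but there are two genuine gaps, and your endgame diverges from the paper's.

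\textbf{The reduction to a fixed graph is not justified.} Each $f_n$ has its \emph{own} jump set covered by its \emph{own} countable family of Lipschitz graphs; taking the union gives a single countable family $\{\Gamma_k\}$, but pigeon-hole alone cannot produce one $\Gamma_k$ carrying mass $\ge c$ for infinitely many $n$: nothing prevents $D^j f_n$ from concentrating on $\Gamma_n$ with these graphs pairwise $\HH^1$-disjoint. The paper resolves this with a stabilization lemma (Lemma~\ref{stabilization}): if the jump mass fails to settle on finitely many of the $J_{f_i}$, one constructs by hand, via a gliding-hump sequence of sign choices, a single bounded functional $\psi_h(u)=\int (u^+-u^-)h\,d\HH^{d-1}$ that does not vanish along a subsequence, contradicting weak convergence. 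This is the first genuinely Schur-type step, and you have not supplied it.

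\textbf{The error in the strip is not automatically small.} You write that $g_n(x)$ is the limit of $f_n(x,\rho)-f_n(x,-\rho)$ ``up to an error controlled by the tangential variation of $f_n$'' in the strip. But you need this error small \emph{uniformly in $n$} for a fixed strip width, and that is the paper's Lemma~2. Uniform integrability handles $D^a f_n$ and (after stabilization) $D^j f_n$, but $D^c f_n$ is not uniformly integrable in any useful sense and requires a separate hand-built functional on $\BV$; this is the most delicate preliminary step.

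\textbf{Your contradiction mechanism is not the paper's.} You propose to build a fixed $T\in\BV^*$ from sign patterns on symmetric strips so that $T(f_n)\not\to 0$. The paper does \emph{not} do this for the main contradiction. Instead, once the strip error is uniformly small (Lemma~2), the oscillation of $g_n(\cdot,0)$ transfers to $g_n(\cdot,t)$ for a.e.\ small $t$; a dyadic partition of $I$ into $2^m$ pieces produces at least $c\,2^m$ subintervals where $|g_n|$ has large average but $g_n$ itself has small average, and the Poincar\'e inequality on the corresponding near-cubes in $I\times[0,\gamma_0]$ forces $|Dg_n|(I\times[0,\gamma_0])\gtrsim 2^m$ for arbitrary $m$, contradicting boundedness of $\|f_n\|_{\BV}$. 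The functional constructions in the paper are auxiliary (for Lemmas~1 and~2); the final blow is a norm blow-up, not the single-functional argument you sketch.
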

	
	\textbf{Remark 1.} We would like to point out that considering only the jump part of the gradient in Theorem~1 is crucial: it is not true for the whole singular part of the gradient. Let us denote the standart triadic Cantor set by $\mathcal{C}$; we can also view it as the group $\mathbb{Z}_2^\omega$. Let $r_n$ be the Rademacher functions defined on the Cantor set (we view them as the functions supported on  the Cantor set --- that is, $r_n(x) = 1$ for $x\in\mathcal{C}$ which has a digit $2$ on the $n$-th position of its triadic expansion). We also denote by $\mu$ the Cantor measure and put $R_n(x) = \int_0^x r_n(t)\, d\mu(t)$.

	Now we put $f_n(x,y)=R_n(x)\Phi(y)$ where $\Phi$ is a non-negative smooth function with compact support such that $\int\Phi = 1$. Clearly, 
	$$
	D^s f_n=D^c f_n=r_n e_1 \,d\,\mu \otimes \Phi\,d\lambda,
	$$
	where $d\,\lambda$ is a one-dimensional Lebesgue measure and $e_1$ is an element of the standard basis in $\R^2$. Observe that $\|D^s f_n\|=1$. To check that $f_n$ tends weakly to $0$ in $\BV$ it is enough to check that $f_n$ and $D^a f_n$ tend weakly to $0$ in $L^1(\R^2)$ and $D^s f_n$ tends to $0$ weakly in $L^1(d\,\mu \otimes d\,\lambda)$. Obviously, we have $\|f_n\|_{L^1(\R^2)}\to 0$ and $\|D^a f_n\|_{L^1(\R^2)}\to 0$.
	
	Now we take an $\R^2$-valued function $g=(g_1, g_2)\in L^\infty(d\, \mu\otimes d\, \lambda)$ and observe that 
	\begin{equation}
		\int g D^s f_n \, d\,\mu\, d\, \lambda =\int r_n \widetilde{g}\, d\, \mu,\label{remark}
	\end{equation}
	where $\widetilde{g}(x)=\int g_1(x,y) \Phi(y)\, d\,\lambda(y)$. We can equivalently treat the function $\widetilde{g}$ as a function defined in the group $\mathbb{Z}_2^\omega$. Its Fourier transform belongs to $c_0(\bigoplus_{n\in\mathbb{N}}\mathbb{Z}_2)$ by Riemann--Lebesgue lemma and therefore the right-hand side of the equation \eqref{remark} tends to $0$.
	
	\begin{figure}
		\centering
		\includegraphics[width=0.7\textwidth]{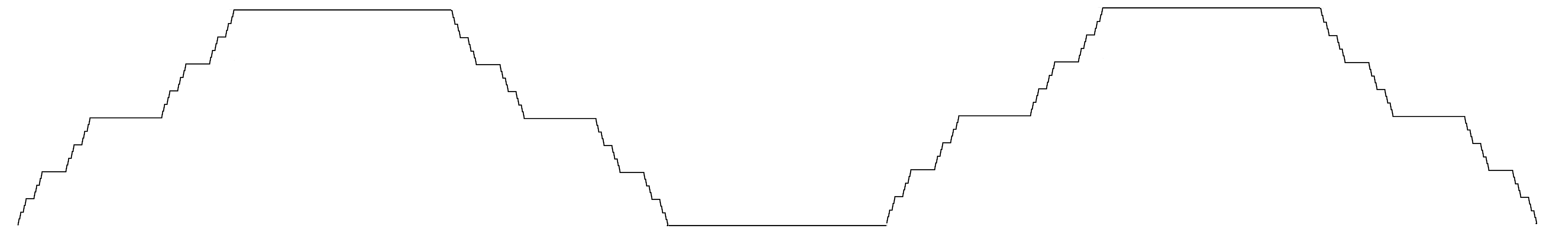}
		\caption{Graph of $R_2(x)$.}
	\end{figure}
	\begin{figure}
		\centering
		\includegraphics[width=0.7\textwidth]{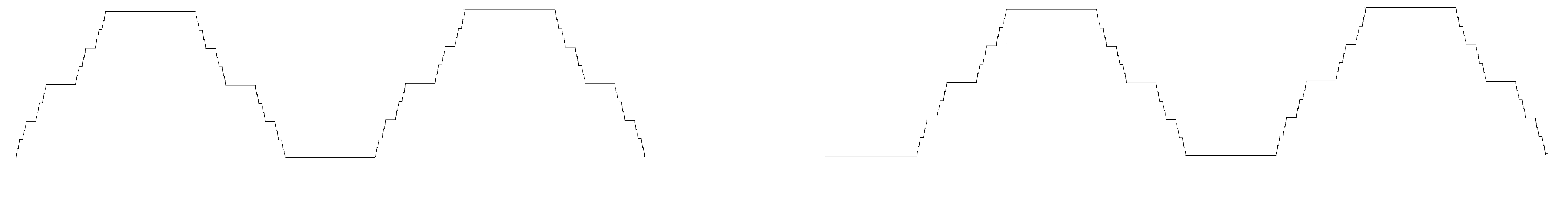}
		\caption{Graph of $R_3(x)$.}
	\end{figure}
	
	\textbf{Remark 2.} Let us now recall that a Banach space $X$ has Dunford--Pettis property (DPP) if for any sequence $x_n$ converging weakly to $0$ in $X$ and any sequence of functionals $x_n^*\in X^*$ converging weakly to $0$ in $X^*$ we have $\lim_{n\to\infty} x_n^*(x_n) = 0$. Besides that, the space $\mathrm{SBV}(\Omega)$ is defined as a subspace of $\BV(\Omega)$ which consists of the functions $f$ such that $D^c f = 0$; for a systematic treatment of this space see \cite[Chapter 4]{AFP2000}.
	
	Assume for simplicity that $\Omega$ is a bounded regular domain. In this case the analogue of Theorem~1 is true for the space $\BV(\Omega)$ (with the same proof). We claim that DPP for the space $\mathrm{SBV}(\Omega)$ is equivalent to DPP for the space $W^{1,1}(\Omega)$. Indeed, since $W^{1,1}(\Omega)$ is a subspace of $\mathrm{SBV}(\Omega)$, DPP for $\mathrm{SBV}(\Omega)$ implies DPP for $W^{1,1}(\Omega)$.
	
	On the other hand, if DPP fails for $\mathrm{SBV}(\Omega)$, there exists a weakly null sequence $f_n\in\mathrm{SBV}(\Omega)$ and a weakly null sequence $\phi_n\in\mathrm{SBV}(\Omega)^*$ such that $\lim_{n\rightarrow \infty}\phi_n(f_n)\neq 0$. By Theorem~1 for the space $\mathrm{SBV}(\Omega)$, $\lim_{n\to\infty}\|D^j f_n\| = 0$. Besides that, by \cite[Theorem 4.6(ii)]{AFP2000} for any $n$ there exists a function $g_n\in \mathrm{SBV}(\Omega)$ such that
		$$D^j g_n= D^j f_n\quad\mbox{and}\quad \|g_n\|_{\mathrm{SBV}(\Omega)}\le C\|D^j f_n\|.$$  
		Since $\lim_{n\to\infty}\|D^j f_n\| = 0$, the sequence 
		\[
		h_n=f_n-g_n
		\]
		is weakly null in $\mathrm{SBV}(\Omega)$. However $h_n\in W^{1,1}(\Omega)$ for every $n=1, 2, \ldots$ and hence this sequence is also weakly null in $W^{1,1}(\Omega)$. The sequence of functionals $\phi_n$ restricted to Sobolev space is weakly null in $W^{1,1}(\Omega)$. Moreover we get
		\[
		\lim_{n\rightarrow \infty}\phi_n(g_n) = \lim_{n\rightarrow \infty}\phi_n(f_n) \neq 0.
		\]
		It yields that $W^{1,1}(\Omega)$ fails DPP.
	
	\textbf{Remark 3.} It is well known that if $\Gamma$ is a Lipschitz surface of codimension one then there exists a trace operator $\mathrm{Tr}:\BV(\R^d)\to L^1(\Gamma)$. It is easy to see that Theorem~1 implies complete continuity of this operator (i.e., it maps weakly convergent sequences to norm-convergent sequences; note that such operator can not be compact --- it follows from the Gagliardo's surjectivity theorem \cite{Gagliardo1957}).
	
	The remaining part of the paper is dedicated to the proof of Theorem~1.
	
	\section{Scheme of proof and certain technical simplifications}
	
	Let us briefly describe the scheme of proof of Theorem~1. It is enough to prove it when the limit function $f$ is equal to $0$. Further, suppose that the statement of the theorem does not hold. Then, extracting a subsequence, we may assume that %a sequence $\{f_n\}$ converges weakly in $\BV$ to $0$ but 
	$\|D^j f_n\|\ge c>0$. After normalization we may also assume that $\|D^j f_n\|=1$. 
	At first, we will work with the jump sets of these functions and do some technical simplifications. In particular, we will show that (up to a small negligible error) we may assume that the sets $J_{f_n}$ have a nice structure: that is, each of them is contained in a finite union of compact Lipschitz graphs. Next, we show that these sets stabilize in a certain sense: that is, there exists such number $N$ that the sets $J_{f_1}, \ldots, J_{f_N}$ cover a ``large part'' of each jump set $J_{f_n}$ with $n>N$. 
	
	In order to simplify our notation, we will concentrate on the case $d=2$. The case of an arbitrary dimension is similar and will be addressed in the final section. After that, since Lipschitz graphs locally behave similar to the intervals we prove our theorem under the assumption that there exists one interval that contains a ``large portion'' of the jump parts of the gradients of functions in our sequence.  Finally, we will use our assumptions in order to find a lot of small sets where the functions highly oscillate. It means that their gradients have large norm which will lead to the desired contradiction.

	Let us start implementing the above strategy of proof. We fix a weakly null sequence $\{f_k\}$ in $\BV$. For simplicity, we introduce notation $J_k:=J_{f_k}$ which will be used from here on. For each set $J_k$ we know that
	$$
	\HH^{d-1} \Big( J_k \setminus \bigcup_{m=1}^\infty \Gamma_{k,m} \Big) = 0
	$$
	where each $\Gamma_{k,m}$ is a compact Lipschitz graph (see \cite[Proposition 2.76]{AFP2000}). Fix any small number $\eps_k>0$. Due to the regularity of the Hausdorff measure, we can find such number $N(k)$ that
	$$
	|D^j f_k| \Big( J_k \setminus \bigcup_{m=1}^{N(k)} \Gamma_{k,m} \Big) < \eps_k.
	$$
	%Moreover, we also can find such compact sets $\widetilde{\Gamma}_{k,m}\subset \Gamma_{k,m}$ that 
	%$$
	%|D^j f_k| \Big( J_k \setminus \bigcup_{m=1}^{N(k)} \widetilde{\Gamma}_{k,m} \Big) < 2 \eps_k.
	%$$
	Now we can find the functions $u_k$ in $\BV(\R^d)$ such that $D^j u_k$ is a restriction of the measure $D^j f_k$ to the set $J_k \setminus \bigcup_{m=1}^{N(k)} \Gamma_{k,m}$, i.e. 
		\[
		D^j u_k = D^j f_k|_{J_k \setminus \bigcup_{m=1}^{N(k)} \Gamma_{k,m}}, 
		\] 
	and $\|u_k\|_{\BV}\leq C\eps_k$ (it follows from a consequence of the Gagliardo's surjectivity theorem, see \cite[Theorem 4.6(ii)]{AFP2000}; it is worth noting that, multiplying by appropriate smooth functions with compact supports, we may assume that the supports of $f_n$'s are compact). If we choose the numbers $\eps_k$ tending to $0$ then we see that $f_k-u_k\to 0$ weakly in $\BV$, $\|D^j (f_k-u_k)\|\ge 1/2$ and 
	$$
	\HH^{d-1} \Big(\mathrm{supp}\, D^j (f_k-u_k)\setminus \bigcup_{m=1}^{N(k)} \Gamma_{k,m}\Big) = 0.
	$$
	It means that in the proof of the theorem we may assume that for each function $f_k$ in our sequence the support of the jump part of its gradient is contained (up to $\HH^{d-1}$-negligible set) in a finite union of compact Lipschitz graphs  $\bigcup\limits_{m} \Gamma_{k,m}$. In what follows we will always use this assumption.
	
	\section{Stabilization of jump sets}
	
	Our next goal is to prove the following simple statement about the jump sets of functions in the sequence.
	
	\begin{lemma}
		\label{lem1}
		For any $\eps>0$ there exists $N$ such that for $k>N$ we have
		$$
		|D^j f_k|\Big( J_k\setminus \bigcup_{i=1}^N J_i \Big)<\eps.
		$$ \label{stabilization}
	\end{lemma}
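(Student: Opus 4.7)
The plan is to argue by contradiction by producing a bounded Borel vector field $g$ whose induced functional $\phi(f):=\int g\cdot dDf$ in $\BV^*$ does not vanish along some subsequence of $\{f_n\}$. Suppose the conclusion fails: for some $\eps>0$ (which we may take as small as we wish), every $N$ admits a $k>N$ with $|D^j f_k|(J_k\setminus\bigcup_{i=1}^N J_i)\geq\eps$. I will construct an increasing subsequence $(k_j)_{j\geq 1}$ satisfying $\phi(f_{k_j})\geq\eps/2$ for every $j$, contradicting $f_{k_j}\rightharpoonup 0$.

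The construction is inductive. Set $k_0:=0$; at step $j$, put $A_j:=J_{k_j}\setminus\bigcup_{i=1}^{k_{j-1}}J_i$ and $g_j:=\sigma_{k_j}\chi_{A_j}$, where $\sigma_k$ is a Borel representative of the Radon--Nikodym direction of $D^j f_k$ with respect to $|D^j f_k|$ (so $|g_j|\leq 1$). By construction the sets $A_j$ are pairwise disjoint, and $A_i\cap J_{k_j}=\emptyset$ whenever $i>j$. The crucial observation is that once $g_1,\ldots,g_{j-1}$ have been fixed, the bounded Borel function $h_{j-1}:=\sum_{i<j}g_i$ induces a $\BV^*$-functional $f\mapsto\int h_{j-1}\cdot dDf$, hence by weak nullity $\int h_{j-1}\cdot dDf_k\to 0$. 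One may therefore choose $k_j$ so large that simultaneously $|\int h_{j-1}\cdot dDf_{k_j}|<\eps/2$ and $|D^j f_{k_j}|(A_j)\geq\eps$; the second condition is available for arbitrarily large $k$ by the contradiction hypothesis applied with $N=k_{j-1}$.

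Now set $g:=\sum_j g_j$. Its support lies in $\bigcup_k J_k$, a $\HH^{d-1}$-sigma-finite set, hence Lebesgue-negligible and annihilated by every Cantor part; therefore $\phi(f_{k_j})=\int g\cdot dD^j f_{k_j}$. Using $A_i\cap J_{k_j}=\emptyset$ for $i>j$ and $|\sigma_{k_j}|=1$, this integral decomposes as
\[
\int_{A_j}\sigma_{k_j}\cdot dD^j f_{k_j}+\int h_{j-1}\cdot dD^j f_{k_j}=|D^j f_{k_j}|(A_j)+\int h_{j-1}\cdot dDf_{k_j}\geq\eps-\eps/2,
\]
contradicting $\phi(f_{k_j})\to 0$ and concluding the proof.

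The main obstacle is the cross-term integral $\int_{A_i\cap J_{k_j}}g_i\cdot dD^j f_{k_j}$ for $i<j$: the sets $A_i$ are disjoint only among themselves and can carry substantial $|D^j f_{k_j}|$-mass on $J_{k_j}$, so a blind subsequence selection provides no control. The remedy is the adaptive inductive construction, which turns the cumulative cross term into the value on $f_{k_j}$ of a $\BV^*$-functional chosen before $k_j$ is picked, and weak nullity then automatically suppresses it.
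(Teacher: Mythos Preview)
Your proof is correct and follows the same contradiction/gliding-hump scheme as the paper, but it handles the cross terms differently. The paper fixes its subsequence first (any subsequence witnessing the failure) and then builds a \emph{scalar} density $h$ on the jump sets, choosing signs $\epsilon_k=\pm1$ on each new piece $J_{n_k}\setminus\bigcup_{j<k}J_{n_j}$ so that the contribution from the already-covered part of $J_{n_k}$ reinforces rather than cancels the new one; this yields $|\psi_h(f_{n_k})|\ge\eps$ for all $k$ with no further use of weak convergence until the final contradiction. You instead build a \emph{vector} density $g$ and select the subsequence adaptively: having fixed $h_{j-1}$, you use weak nullity to push $k_j$ far enough that $|\int h_{j-1}\cdot dDf_{k_j}|<\eps/2$, so the cross term is simply small. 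Your route avoids the sign gymnastics and does not rely on the paper's preliminary reduction to jump sets lying in finitely many Lipschitz graphs; the paper's route uses the weak hypothesis only once and gives the slightly sharper bound $|\phi(f_{n_k})|\ge\eps$ rather than $\eps/2$. One minor phrasing issue: when you say ``the second condition is available for arbitrarily large $k$ by the contradiction hypothesis applied with $N=k_{j-1}$'', you are in fact applying it with every $N\ge k_{j-1}$ and using the monotonicity $J_k\setminus\bigcup_{i\le N}J_i\subset J_k\setminus\bigcup_{i\le k_{j-1}}J_i$; this is correct but worth making explicit.
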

	\begin{proof}
		Assume the opposite. It means that for some $\eps>0$ there exists a subsequence $f_{n_k}$ such that
		$$
		|D^j f_{n_k}|\Big( J_{n_k} \setminus \bigcup_{i=1}^{n_{k-1}} J_i \Big)\ge \eps.
		$$
		
		We would like to construct a bounded linear functional $\phi$ on $\BV$ such that $|\phi(f_{n_k})|\ge \eps$; then we will get a contradiction with the assumption of the weak convergence. %The sequence of functions $f_{n_k}$ is now fixed. %It is enough to define the bounded linear functional on a subspace $\operatorname{span}\{f_{n_k}\}$ and then extend it to the whole space $\BV$ by Hahn--Banach theorem. 
			Note that for every (scalar-valued) $\HH^{d-1}$-measurable bounded  function $h$ supported on the set $\bigcup\limits_{k,m}\Gamma_{k,m}$  there exists a corresponding functional $\psi_h$ on the space $\BV$ %$\operatorname{span}\{f_{n_k}\}$ 
			given by the formula 
			$$
			\psi_h (u) := \int_{\bigcup\limits_{k,m}\Gamma_{k,m}} (u^+ - u^-) h \, d\HH^{d-1} = \int_{J_{u}} (u^+ - u^-) h \, d\HH^{d-1}.
			$$
			For each function $u\in\BV$ we have 
			\[
			|\psi_h(u)|\leq \|h\|_{L^{\infty}(\bigcup \Gamma_{k,m})} \|D^j u\|\leq \|h\|_{L^{\infty}(\bigcup \Gamma_{k,m})}\|u\|_{\BV}.
			\]
		
			Now we will construct a particular function $h$. At first we put 
			\[
			h=\mathrm{sign}(f_{n_1}^+ - f_{n_1}^-)\ \text{on the set}\  J_{n_1} 
			\]
		
			We inductively choose the sign $\epsilon_k=\pm 1$ and put
			$$
			h=\epsilon_k\mathrm{sign}(f_{n_k}^+ - f_{n_k}^-)\ \text{on the set}\  J_{n_k} \setminus \bigcup_{j=1}^{k-1} J_{n_j}.
			$$ 
			so that
			\begin{equation}
				\Big|\int_{J_{n_k}} (f_{n_k}^+-f_{n_k}^-) h\, d\HH^{d-1} \Big| \ge \eps.\label{functional}
			\end{equation}
			We indeed can choose such sign $\epsilon_k$:
			since
			\[
			\Big|\int_{J_{n_k} \setminus \bigcup_{j=1}^{k-1} J_{n_j}} (f_{n_k}^+-f_{n_k}^-) h \, d\HH^{d-1} \Big|=|D^j f_{n_k}|\Big( J_{n_k} \setminus \bigcup_{i=1}^{n_{k-1}} J_i \Big)\ge \eps,
			\]
		we only require $\epsilon_k$ to satisfy the condition
		$$
		\sign\Big( \int_{J_{n_k}\cap\bigcup_{i=1}^{k-1} J_{n_i}} (f_{n_k}^+-f_{n_k}^-) h\, d\HH^{d-1} \Big) = \sign\Big( \int_{J_{n_k}\backslash \bigcup_{i=1}^{k-1} J_{n_i}} (f_{n_k}^+-f_{n_k}^-) h\, d\HH^{d-1} \Big).
		$$
		We put $h=0$ outside the set where we have just defined it.
		
		The inequality \eqref{functional} implies that $|\psi_h(f_{n_k})|\ge\eps$ %and if we define $\phi$ to be any extension of $\psi_h$ to the whole space $\BV$ 
		which contradicts the weak convergence.
	\end{proof}
	
	Summarizing the results of the last two subsections, we see that without loss of generality we can assume that $\|D^j f_n\|=1$ and we have a finite family of compact Lipschitz graphs $\{\Gamma_k\}_{k=1}^N$ such that for every $n$
	$$
	|D^j f_n|\Big( \bigcup_{k=1}^N \Gamma_k \Big)\ge 1-\eps.
	$$

	\section{Reasoning for a single interval}
	
	\subsection{Preparation and outline of the proof}
	
	For simplicity, we will now assume that dimension $d$ is equal to $2$; the case of the arbitrary dimension will be discussed in the final section. By application of Lemma~\ref{lem1} for a small number $\eps$ (say, $\eps=\frac{1}{100}$ will suffice) we get that the jump parts of the gradients of functions $f_n$ concentrate on the finite union of Lipschitz graphs. Since locally every Lipschitz graph ``behaves like an interval'', we will further proceed under the assumption that they in fact concentrate just on one interval, i.e., there exists an interval $I$ such that
	\begin{equation}
		|D^j f_n| (I) = \int_I |f_n^+(x,0)-f_n^-(x,0)|\, dx \ge c.\label{grad_jump}
	\end{equation}
	This will allow us to present the main idea of the proof. We will discuss how to pass to a general case in the next section. It is worth noting that the constant $c$ here is actually equal to $\frac{99}{100}$. 
	
	To simplify the notation, assume that $I\subset\R\times\{0\}\subset\R^2$. For $m=1,2,3,\ldots$ consider the partition of $I$ into $2^m$ intervals of equal lengths; we denote the set of intervals in such partition by $\D_m$:
	$$
	\D_m=\{I_{i,m}\}_{i=1}^{2^m}.
	$$

	Consider now the strip $I\times [-\gamma, \gamma]$. We will specify the choice of a small number $\gamma$ in a moment. We have the following inequality for any function $u\in\BV(\R^2)$ and almost every $t\in (0,\gamma)$:
	\begin{align}
		\int_{I} |u^+ (x,0)-u(x,t)|\, dx &\leq \int_{I\times(0,\gamma]} d\, |Du|= |D u|(I\times (0, \gamma]);\label{Fund_Thm_Calc}\\
		\int_{I} |u^- (x,0)-u(x,-t)|\, dx & \;\leq  |D u|(I\times [-\gamma, 0)).
		\label{Fund_Thm_Calc2}
	\end{align}
	Several remarks are in order here. The functions in $\BV(\R^2)$ do not have values at points but they can be defined $\HH^{1}$-a.e. (see \cite[Remark 3.79]{AFP2000}); we will assume that we work with precise representatives of our functions. If the function $u$ is smooth then the inequality \eqref{Fund_Thm_Calc} is obvious: it is just a consequence of fundamental theorem of calculus. For general $\BV$ functions this inequality can be derived from the definition of the values $u^+$ and $u^-$ using the approximation with respect to strict convergence of $\BV$ function $u$ by smooth functions.
	
	For $0<t<\gamma$ put $g_n(x,t)=f_n(x,t)-f_n(x,-t)$; also for $t=0$ put $g_n(x,0)=f_n^+(x,0)-f_n^-(x,0)$.
	By the triangle inequality we have
		\begin{equation*}
			\int_I |g_n(x,0)-g_n(x,t)|\, dx \leq   \int_I |f^+_n(x)-f_n(x,t)|\, dx + \int_I |f^-_n(x)-f_n(x,-t)|\, dx .
		\end{equation*}
		For $u=f_n$ we apply \eqref{Fund_Thm_Calc} and \eqref{Fund_Thm_Calc2} in order to obtain  \begin{equation}
			\int_I |g_n(x,0)-g_n(x,t)|\, dx \leq   |D f_n|(I\times [-\gamma, \gamma]\setminus I \times\{0\}).\label{Calc}
	\end{equation} 
	The following Lemma shows that the right hand side of this inequality can be made arbitrarily small.
	
	\begin{lemma}
		Under our assumptions for every  $\delta>0$ there exists $\gamma > 0$ such that for every $n\in\mathbb{N}$ we have $|D f_n|(I\times [-\gamma, \gamma]\setminus I\times \{0\}) \le \delta$.
	\end{lemma}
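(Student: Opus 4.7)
The plan is a proof by contradiction combining weak-$*$ compactness of positive Radon measures with the weak null hypothesis. Assume the statement fails: then, by a diagonal extraction, there exist $\delta>0$, an increasing sequence $n_k\to\infty$, and $\gamma_k\downarrow 0$ with $|Df_{n_k}|(I\times[-\gamma_k,\gamma_k]\setminus I\times\{0\})>\delta$. The fact that $n_k$ must tend to infinity is automatic, since each individual $|Df_n|$ is a finite measure and the sets $I\times[-\gamma,\gamma]\setminus I\times\{0\}$ decrease to the empty set as $\gamma\to 0$, so the quantity tends to zero for any fixed $n$.

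Since $\{f_n\}$ is weakly convergent in $\BV$, the norms $\|Df_n\|$ are uniformly bounded. By Banach--Alaoglu I pass to a further subsequence so that the positive measures $|Df_{n_k}|$ converge weakly-$*$ to a finite positive Radon measure $\mu$ on $\R^2$. Using that the absolutely continuous and Cantor parts of $Df_n$ vanish on the $\mathcal{H}^1$-finite set $I\times\{0\}$, one has $|Df_n|(I\times\{0\})=|D^j f_n|(I)\ge c$. Hence, for any fixed $\gamma>0$ and all $k$ large enough that $\gamma_k<\gamma$,
\[
|Df_{n_k}|(I\times[-\gamma,\gamma])\ge |Df_{n_k}|(I\times\{0\})+|Df_{n_k}|(I\times[-\gamma_k,\gamma_k]\setminus I\times\{0\})\ge c+\delta.
\]
Upper semicontinuity of weak-$*$ limits on compact sets gives $\mu(I\times[-\gamma,\gamma])\ge c+\delta$ for every $\gamma>0$, and letting $\gamma\to 0$ yields $\mu(I\times\{0\})\ge c+\delta$.

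The remaining step, which is the main obstacle, is to contradict this excess mass on $I\times\{0\}$ using weak nullity. My plan is to package the bad behavior into a single bounded linear functional on $\BV$. For each $k$, by the dual characterization of the total variation of a vector measure, I choose a continuous vector field $\phi_k$ with $\|\phi_k\|_\infty\le 1$, supported in $I\times[-\gamma_k,\gamma_k]\setminus I\times\{0\}$, such that $\int\phi_k\cdot dDf_{n_k}>\delta/2$. After thinning the subsequence so that $\gamma_{k+1}$ is much smaller than $\gamma_k$, the supports can be arranged to be essentially disjoint, and the assembled field $\Phi=\sum_k\phi_k$ remains bounded; thus $u\mapsto\int\Phi\cdot dDu$ is a bounded linear functional on $\BV$. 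Evaluated on $f_{n_k}$ its value is at least $\delta/2$ minus the cross-terms $\int\phi_j\cdot dDf_{n_k}$ for $j\neq k$. Controlling these cross-terms---in particular for $j>k$, where $\phi_j$ is supported in an even thinner strip near $I\times\{0\}$ on which $|Df_{n_k}|$ is a priori not known to be small---is the hardest part; here the plan is to combine the jump-functional argument from Lemma~\ref{lem1} (which forces the signed jump densities $(f_n^+-f_n^-)$ to converge to zero weakly in $L^1(I)$) with a careful diagonal selection of the indices $n_k$ so that the total cross-contribution is below $\delta/4$, producing the desired contradiction with $\Phi(f_{n_k})\to 0$.
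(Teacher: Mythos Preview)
Your proposal is a plan rather than a proof: you explicitly identify the cross-term control as ``the hardest part'' and only sketch an intention (``combine the jump-functional argument from Lemma~\ref{lem1} with a careful diagonal selection''), so the argument stops precisely at the place where the work begins. Moreover, the weak-$*$ compactness paragraph leading to $\mu(I\times\{0\})\ge c+\delta$ is never used afterwards and should be discarded; it does not feed into the gliding-hump functional you build next. Two further points are worth flagging. First, your assembled field $\Phi=\sum_k\phi_k$ need not be continuous (the shells accumulate on $I\times\{0\}$), so you must justify that a merely bounded Borel vector field still defines a bounded linear functional $u\mapsto\int\Phi\cdot dDu$ on $\BV$; this is true but needs to be said. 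Second, you single out the $j>k$ cross-terms as hardest, but in fact those are the easy ones: once $n_k$ is fixed you can shrink $\gamma_{k+1}$ so that $|Df_{n_k}|(I\times[-\gamma_{k+1},\gamma_{k+1}]\setminus I\times\{0\})$ is as small as you like. The genuinely delicate direction is $j<k$, where $\phi_j$ lives on a fixed outer shell on which $|Df_{n_k}|$ can be as large as $\|f_{n_k}\|_{\BV}$; here the right move is either a diagonal choice of $n_k$ using that each $\int\phi_j\cdot dDf_n\to 0$ (continuous $\phi_j$ gives a bounded functional on $\BV$), or a sign choice as in Lemma~\ref{lem1} that makes the outer contribution only help rather than hurt.

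The paper's own proof takes a quite different route and avoids most of these difficulties by decomposing $|Df_n|=|D^af_n|+|D^jf_n|+|D^cf_n|$ and treating the three pieces separately. For $D^af_n$ one observes that $D^af_n\to 0$ weakly in $L^1(\R^2)$, hence is uniformly integrable, so the mass on thin strips is uniformly small. For $D^jf_n$ one first applies Lemma~\ref{lem1} to trap almost all of the jump mass on a fixed finite union of Lipschitz graphs $K$, then uses that $f_n^+-f_n^-\to 0$ weakly in $L^1(\HH^1|_K)$, hence is uniformly integrable there; since $\HH^1((I\times[-\gamma,\gamma]\setminus I\times\{0\})\cap K)\to 0$, this suffices. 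Only the Cantor part requires a gliding-hump construction: there the paper builds nested shells $I\times[\gamma_{k+1},\gamma_k)$ with $|D^cf_{n_k}|\ge 7\delta/10$ on the $k$-th shell and $\le\delta/10$ inside $I\times[0,\gamma_{k+1})$, embeds $\overline{\mathrm{span}}\{f_{n_k}\}$ isometrically into $L^1(\mu;\R^2)\oplus L^1(\lambda)$ for a dominating measure $\mu$, and chooses the signs $\eps_k$ so that the outer-shell contributions have the same sign as the $k$-th shell contribution---this sign trick eliminates the need to bound the $j<k$ cross-terms at all. The inner cross-terms are then controlled by the already-proved bounds on $D^a$, $D^j$ and by the inductive choice of $\gamma_{k+1}$. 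Compared with your plan, this decomposition localises the hard work to $D^c$ and replaces your unspecified diagonal selection by an explicit sign argument.
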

	
	We postpone the proof of this Lemma until the end of the present section.
	
	Now we use this Lemma with $\delta=\eps$ and choose $\gamma$ sufficiently small. By \eqref{Calc} we get for almost every $0<t<\gamma$ 
	\begin{equation}
		\int_I |g_n(x,t)-g_n(x,0)|\, dx \le \eps\label{diff_l1}
	\end{equation}
	and hence by \eqref{grad_jump} 
	\begin{equation}
		\label{200420241}
		\int_I |g_n(x,t)|\, dx \ge c-\eps.
	\end{equation}
	
	Let us now briefly describe the main idea of the remaining part of the proof. Since the jump part of the gradient converges weakly to $0$ in $L^1$ on the interval $I\times\{0\}$, it oscillates. Using the inequality \eqref{diff_l1} we will transfer this oscillation to $I\times\{t\}$ for a.e. sufficiently small $t > 0$. This will imply that the total variation of gradients of functions in our sequence is unbounded.
	
	\subsection{Intervals with big averages of absolute values}\label{subs: largeint}
	
	Note that the sequence of functions $f_n^+ - f_n^-$ is weakly null in the space $L^1(I)$. Indeed, for every $h\in L^\infty(I)$ we can define the linear functional 
		$$
		\psi_h(u)=\int_I (u^+-u^-) h\, d\HH^1
		$$
		on the space $\BV$. Since the sequence $\{f_n\}$ is weakly null in $\BV$, we have $\psi_h(f_n) \to 0$. Thus for every $h\in L^\infty(I)$
		\[
		\lim_{n\rightarrow \infty} \int_I (f_n^+-f_n^-) h\, d\HH^1 =0, 
		\]
	and therefore the sequence $\{f_n^+ - f_n^-\}$ is indeed weakly null in $L^1(I)$.
	This in turn implies that it is uniformly integrable with respect to the measure $\HH^1|_I$. Hence we can fix a number $p > 0$ such that for  $A\subset I$ the inequality
	\begin{equation}
		\int_A |g_n(x,0)|\, dx =  \int_A |f_n^+ - f_n^-| \, d\HH^1 \ge\frac{c}{4} \label{int_is_big} 
	\end{equation}
	implies
	\begin{equation}\label{then_meas_is_big}
		\HH^1 (A) \ge 3p.
	\end{equation}

	%Arguing as in the proof of Lemma~\ref{stabilization}, we see that if we treat the jump parts of the derivatives $D^j f_n$ as $L^1$ functions on the union of all Lipschitz graphs containing the jump sets $J_{f_n}$ (with respect to $\HH^1$ measure) then these functions converge weakly to $0$ in $L^1$ (every bounded function on the union of these Lipschitz graphs gives rise to a bounded linear functional on $\BV(\R^2)$). It means that the functions $f_n^+ - f_n^-$ are uniformly integrable, 

	Let us define the set $L_n^{(m)}(t)\subset \{1,\ldots, 2^m\}$ by the condition
	$$
	i\in L_n^{(m)}(t) \quad \text{iff} \quad \frac{1}{\HH^1(I_{i,m})} \int_{I_{i,m}} |g_n(x,t)|\, dx \ge \frac{c-\eps}{2\HH^1(I)}.
	$$
	Recall that in the beginning of the proof we have chosen the constants $\eps=\frac{1}{100}$ and $c=\frac{99}{100}$. %However, the reasoning below works for any positive values of $c$ and $\eps$ such that $\eps < \frac{c}{6}$. 
		Therefore, it follows that if $i\in L_n^{(m)}(t)$, then
	\begin{equation}
		\label{240420240}
		\quad \frac{1}{\HH^1(I_{i,m})} \int_{I_{i,m}} |g_n(x,t)|\, dx\ge \frac{c}{4\HH^1(I)}.
	\end{equation}
	The following notation is also convenient for us:
	$$
	L_n^{(m)}(t)^*=\bigcup_{i\in L_n^{(m)}(t)} I_{i,m}.
	$$
	Observe that for $i\notin L_n^{(m)}(t)$ we have 
		\[
		\int_{I_{i,m}} |g_n(x,t)|\, dx \le \frac{(c-\eps)\HH^1(I_{i,m})}{2\HH^1(I)}.
		\]
		Since the intervals $\{I_{i,m}\}_{i=1}^{2^m}$ are pairwise disjoint, we get 
		\begin{equation}\label{eq: 200420242}
			\sum_{i\not\in L_n^{(m)}(t)} \int_{I_{i,m}} |g_n(x,t)|\, dx\le \frac{c-\eps}{2}\sum_{i\not\in L_n^{(m)}(t)}\frac{\HH^1(I_{i,m})}{\HH^1(I)}\le\frac{c-\eps}{2}.
	\end{equation}
	Then by \eqref{200420241}, \eqref{eq: 200420242} and the fact that $\{I_{i,m}\}_{i=1}^{2^m}$ is a partition of  $I$, we have:
	\begin{multline*}
		c-\eps\le \int_{I}|g_n(x,t)|\, dx = \sum_{i\in L_n^{(m)}(t)} \int_{I_{i,m}}|g_n(x,t)|\, dx + \sum_{i\not\in L_n^{(m)}(t)} \int_{I_{i,m}}|g_n(x,t)|\, dx\\ \leq \int_{L_n^{(m)}(t)^*} |g_n(x,t)|\, dx + \frac{c-\eps}{2}.
	\end{multline*}
	It yields
	$$
	\int_{L_n^{(m)}(t)^*} |g_n(x,t)|\, dx\ge \frac{c-\eps}{2}.
	$$
	Using the inequality \eqref{diff_l1}, since $L_n^{(m)}(t)^*\subset I$, we deduce that then
	$$
	\int_{L_n^{(m)}(t)^*} |g_n(x,0)|\, dx\ge \frac{c-3\eps}{2}\ge \frac{c}{4}
	$$
	provided that $\eps<\frac{c}{6}$. Therefore, by \eqref{int_is_big} and \eqref{then_meas_is_big} we have $\HH^1(L_n^{(m)}(t)^*)\ge 3p$ and hence
	\begin{equation}
		\# L_n^{(m)}(t) \ge 2^m 3p \HH^1(I)^{-1}.\label{ave_abs}
	\end{equation}
	
	\subsection{Intervals with small averages}
	
	Now we apply Lemma 2 once again, this time with the parameter $\delta =  3\eps_0$ where $\eps_0\le \frac{pc}{300\HH^1(I)}$. By \eqref{Calc} it follows that there exists $\gamma_0$ such that for a.e. $0<t<\gamma_0$ we have
	\begin{equation}
		\int_I |g_n(x,t)-g_n(x,0)|\, dx \le 3\eps_0\label{diff_l1_0}.
	\end{equation}
	
	Now, since the sequence of functions $f_n^+ - f_n^-$ is weakly null in the space $L^1(I)$, for every small $\delta_0>0$ there exists a number $n(\delta_0, m)$ such that for $n>n(\delta_0, m)$ and every $1\le i\le 2^m$ 
	$$
	\frac{1}{\HH^1(I_{i,m})}\Big|\int_{I_{i,m}} g_n(x,0)\, dx\Big|=\frac{1}{\HH^1(I_{i,m})}\Big|\int_{I_{i,m}} (f_n^+(x,0)-f_n^-(x,0))\, dx\Big| < \delta_0.
	$$
	We choose $\delta_0=\eps_0/p$.
	
	Let us denote
	\begin{equation*}
		\alpha_{i,n}^{(m)}(t)=\frac{1}{\HH^1(I_{i,m})} \int_{I_{i,m}} g_n(x,t)\, dx.
	\end{equation*}
	Then for sufficiently big $n$ we have $|\alpha_{i,n}^{(m)}(0)|<\frac{\eps_0}{p}$. %Fix yet another small number $\widetilde{\eps}=\frac{3\eps_0}{p}$ 
	We define the set $K_n^{(m)}(t)\subset\{1,\ldots, 2^m\}$ by the following condition:
		\begin{equation*}
			i\in K_n^{(m)}(t) \quad \text{iff}\quad  |\alpha_{i,n}^{(m)}(t)|\leq \frac{\eps_0}{p}+\frac{3\eps_0}{p}=\frac{4\eps_0}{p}
		\end{equation*}
		For $i\in K_n^{(m)}(t)$ it is clear, by the choice of $\eps_0$, that
		\begin{equation}
			\label{240420241}
			|\alpha_{i,n}^{(m)}(t)|\leq \frac{c}{75\HH^1(I)}.
	\end{equation}
	Since for $i\not\in K_n^{(m)}(t) $ we have $|\alpha_{i,n}^{(m)}(t)| \ge \frac{4\eps_0}{p}$, it follows that for such $i$ 
	\[
	|\alpha_{i,n}^{(m)}(t) - \alpha_{i,n}^{(m)}(0)|\ge|\alpha_{i,n}^{(m)}(t)| - |\alpha_{i,n}^{(m)}(0)| \ge \frac{3\eps_0}{p}.
	\]
	Thus, we get
		\begin{equation}\label{eq: 200420243}
			\sum_{i\notin K_n^{(m)}(t)} |\alpha_{i,n}^{(m)}(t)-\alpha_{i,n}^{(m)}(0)|\ge\frac{3\eps_0}{p}(2^m-\# K_{n}^{(m)}(t)).   
	\end{equation}
	Now we will prove an upper estimate for the left-hand side of this inequality. Since $\HH^1(I_{i,m}) = 2^{-m} \HH^1(I)$ for every $1\le i\le 2^m$, we have 
		\[
		\begin{split}
			\int_{I_{i,m}} |g_n(x,t)-g_n(x,0)| \, dx&\ge \Big|\int_{I_{i,m}} g_n(x,t)-g_n(x,0)\, dx \Big|
			\\ &= 2^{-m} \HH^1(I)\;|\alpha_{i,n}^{(m)}(t)-\alpha_{i,n}^{(m)}(0)|.
		\end{split}
		\]
	Therefore
		\[
		\begin{split}
			\int_I |g_n(x,t)-g_n(x,0)|\, dx&\ge \sum_{i\notin K_n^{(m)}(t)} \int_{I_{i,m}} |g_n(x,t)-g_n(x,0)| \, dx
			\\&\ge 2^{-m} \HH^1(I)\sum_{i\notin K_n^{(m)}(t)} |\alpha_{i,n}^{(m)}(t)-\alpha_{i,n}^{(m)}(0)|. 
		\end{split}
		\]
		Combining the above upper estimate with \eqref{eq: 200420243} and \eqref{diff_l1_0}  we get
		$$
		3\eps_0\cdot 2^m \HH^1(I)^{-1} \ge \frac{3\eps_0}{p}(2^m-\# K_{n}^{(m)}(t)).
		$$
	Hence
	\begin{equation}
		\# K_{n}^{(m)}(t) \ge 2^m ( 1-p\HH^1(I)^{-1} ).\label{ave}
	\end{equation}
	
	\subsection{The end of proof}
	
	Denote $S_n^{(m)}(t)=L_n^{(m)}(t)\cap K_n^{(m)}(t)$. Comparing the estimates \eqref{ave_abs} and \eqref{ave} we see that 
	\begin{equation}
		\# S_n^{(m)}(t) \ge \# L_n^{(m)}(t) + \# K_n^{(m)}(t) - 2^m \ge 2^{m+1}p\HH^1(I)^{-1}.\label{good_int}
	\end{equation}
	
	\subsubsection{Proof for smooth functions}
	
	If all our functions $f_n$ were smooth (outside of $I$), then the contradiction would follow almost immediately. Indeed, if $i\in S_n^{(m)}(t)$, then by \eqref{240420240} and \eqref{240420241} we have:
	\begin{align}
		\frac{1}{\HH^1(I_{i,m})}\int_{I_{i,m}} |g_n(x,t)|\, dx &\ge \frac{c}{4\HH^1(I)}\label{one} \\ \Big|\frac{1}{\HH^1(I_{i,m})} \int_{I_{i,m}} g_n(x,t)\, dx\Big| &\le \frac{c}{75\HH^1(I)}.\label{two}
	\end{align}
	It means that the function $x\mapsto g_n(x,t)$ oscillates on the interval $I_{i,m}$. To be more precise, the following inequality follows from here:
	\begin{equation}
		\int_{I_{i.m}} |Dg_n(x,t)|\, dx \ge \frac{71c}{300\HH^1(I)}\ge \frac{c}{5\HH^1(I)} \label{oscill}
	\end{equation}
	The reason for it is elementary for smooth functions: from \eqref{one} we see that there exists a point $x_1\in I_{i,m}$ such that $|g_n(x_1,t)|\ge\frac{c}{4\HH^1(I)}$. Without loss of generality we may assume that $g_n(x_1, t)>0$. Then using \eqref{two} we find a point $x_2\in I_{i,m}$ such that $g_{n}(x_2, t)\le \frac{c}{75\HH^1(I)}$ and then apply the fundamental theorem of calculus to the segment between these two points.
	
	Summing the inequalities \eqref{oscill} over all $i\in S_n^{(m)}(t)$ and applying \eqref{good_int}, we get:
	$$
	\int_I |Dg_n(x,t)|\, dx \ge  \frac{c}{5\HH^1(I)} \# S_n^{(m)}(t)\ge\frac{2^{m+1}pc}{5}\HH^1(I)^{-2}.
	$$
	By Fubini's theorem, we get from here that
	\begin{equation}
		\int_{I\times [\gamma_0/2, \gamma_0]} |Dg_n(x,t)|\, dx\, dt \ge \gamma_0 \frac{pc}{5}2^m\HH^1(I)^{-2}.\label{we_win}
	\end{equation}
	Since $m$ could be arbitrarily large we get that 
	$$
	2\|f_n\|_{\BV} \ge \|g_n\|_{\BV} \to \infty,
	$$
	and it clearly contradicts the assumption of the weak convergence.
	
	In the general case (when functions $f_n$ are non-smooth) we could write a similar proof (using for example \cite[Theorem 5.3.5]{Ziemer} instead of Fubini's theorem); however, it seems to be difficult to generalize such proof to the case of arbitrary dimension. So, we proceed with a different proof which can be easily generalized to any dimension.
	
	\subsubsection{Proof for functions in $\BV$}
	
	In a general case (when functions $f_n$ are not necessarily smooth) we still would like to prove an inequality similar to \eqref{we_win}.
	
	Let us consider the partition of the segment $[0,\gamma_0]$ into intervals 
	$$
	0=t_0<t_1<t_2<\ldots<t_N=\gamma_0,
	$$
	such that 
		$$
		2^{-m-1}\HH^1(I) \le t_{k+1}- t_k \le 2^{-m+1}\HH^1(I), \quad 0\le k\le N-1,
		$$
		and the sets $S_n^{(m)}(t)$ are defined for all $t\in\{t_1,\ldots, t_{N-1}\}$; note that these sets are defined for a.e. $0 < t < \gamma_0$. Observe that 
		\begin{equation}
			N \ge 2^{m-1} \gamma_0 \HH^1(I)^{-1}.\label{To_please_Michal}
	\end{equation}
	
	Now the rectangle $I\times [0,\gamma_0]$ is divided into strips of the form $I\times [t_k, t_{k+1}]$. Consider one such strip which is in turn divided into rectangles (that are actually ``almost squares'' --- the ratio between the lengths of their sides is at most 2) $I_{i,m}\times [t_k, t_{k+1}]$. Let us define yet another set of indices $G_n^{(m)}(t_k)$ as follows:
	\begin{equation}
		G_n^{(m)}(t_k) = \Big\{i\in S_{n}^{(m)}(t_k): \, |Dg_n|(I_{i,m}\times [t_k, t_{k+1}])\leq 2^{-m}\frac{c}{100}\Big\}.\label{good_squares}
	\end{equation}
	%We will call the elements of $G_n^{(m)}(t_k)$ the \emph{good} numbers.
	Now we estimate the size of the set $S_n^{(m)}(t_k)\setminus G_n^{(m)}(t_k)$. Note that for $1\le k\le N-1$
		\[
		\begin{split}
			|Dg_n|(I\times [t_k, t_{k+1}])&\geq\sum_{i\in S_n^{(m)}(t_k)\setminus G_n^{(m)}(t_k) } |Dg_n|(I_{i,m}\times [t_k, t_{k+1}])
			\\&\geq  \frac{2^{-m}c}{100} (\#S_n^{(m)}(t_k)-\# G_n^{(m)}(t_k)).
		\end{split}
		\]
	%Recall that $\#S_n^{(m)}(t_k)\ge 2^{m+1}p\HH^1(I)^{-1}$. 
	Since 
	$$
	|Dg_n|(I\times [t_k, t_{k+1}])  \leq |Dg_n|(I\times (0, \gamma_0])\leq 3\eps_0\leq \frac{pc}{100}\HH^1(I)^{-1},
	$$
	we have
		\begin{equation*}
			\#S_n^{(m)}(t_k)-\# G_n^{(m)}(t_k) \le 2^m p\HH^1(I)^{-1}.
	\end{equation*}
	Hence, by \eqref{good_int} we have the following estimate on the size of $G_N^{(m)}(t_k)$: %there are at least $2^mp\HH^1(I)^{-1}$ elements in $G_N^{(m)}(t_k)$.
		\[
		\# G_N^{(m)}(t_k) \ge 2^mp\HH^1(I)^{-1}.
		\]
	
	Take any number $i\in G_n^{(m)}(t_k)$. We can rewrite inequalities \eqref{one} and \eqref{two} as follows:
	\begin{align*}
		\int_{I_{i,m}} |g_n(x,t_k)|\, dx &\ge \frac{c}{4}2^{-m}, \\ \Big|\int_{I_{i,m}} g_n(x,t_k)\, dx\Big| &\le \frac{c}{75}2^{-m}.
	\end{align*}
	Recall that by a similar argument as in \eqref{Calc} for a.e. $t\in [t_k, t_{k+1}]$ we get
		\[
		\int_I |g_n(x,t_k)-g_n(x,t)|\, dx \leq   |D g_n|(I\times [t_k, t_{k+1}]).
		\]
	%U and applying the same estimate as \eqref{Calc} to rectangle $I_{i,m}\times [t_k, t_{k+1}]$ 
	
	Using \eqref{good_squares} for $i \in  G_n^{(m)}(t_k)$ and a.e. $t\in [t_k, t_{k+1}]$ we obtain:
	\begin{align*}
		\int_{I_{i,m}} |g_n(x,t)|\, dx &\ge \frac{c}{4}2^{-m}-\frac{c}{100}2^{-m} = \frac{24c}{100}2^{-m}, \\ \Big|\int_{I_{i,m}} g_n(x,t)\, dx\Big| &\le \frac{c}{75}2^{-m}+\frac{c}{100}2^{-m}\leq \frac{3c}{100}2^{-m}.
	\end{align*}
	
	Denote the rectangle $I_{i,m}\times [t_k, t_{k+1}]$ by $Q_{i,m}^{(k)}$. Recall that $$2^{-m-1}\HH^1(I)\le|t_{k+1}-t_k|\le 2^{-m+1}\HH^1(I).$$ Applying Fubini's theorem we get that
	\begin{align}
		\int_{Q_{i,m}^{(k)}} |g_n(x,t)|\, dx\, dt \geq \frac{24c}{100}2^{-2m-1}\HH^1(I)=\frac{12c}{100}2^{-2m}\HH^1(I),\label{ABS_ONE}\\
		\Big| \int_{Q_{i,m}^{(k)}} g_n(x,t)\, dx\, dt \Big|\leq \frac{3c}{100}2^{-2m+1}\HH^1(I)=\frac{6c}{100}2^{-2m}\HH^1(I).\label{ABS_TWO}
	\end{align}
	
	Now we use the Poincar\'{e} inequality (see \cite[Theorem 3.44 and Remark 3.45]{AFP2000}) for each rectangle $Q_{i,m}^{(k)}$; recall that the lengths of its sides are comparable to $2^{-m}\HH^1(I)$ and therefore they are uniformly bi-Lipshitz images of the ball of radius $2^{-m}\HH^1(I)$:
	\begin{multline}
		|Dg_n|(Q_{i,m}^{(k)})\gtrsim 2^m\HH^1(I)^{-1} \int_{Q_{i,m}^{(k)}} \Big|g_n(x,t)-\fint_{Q_{i,m}^{(k)}}g_n\Big|\,dx\,dt \\ \ge  2^m\HH^1(I)^{-1}\Big( \int_{Q_{i,m}^{(k)}} |g_n(x,t)|\,dx\,dt - \Big| \int_{Q_{i,m}^{(k)}} g_n(x,t)\,dx\,dt \Big| \Big) \geq\frac{6c}{100}2^{-m}.\label{POINCARE}
	\end{multline}
	This inequality holds for at least $2^m p\HH^1(I)^{-1}$ rectangles $Q_{i,m}^{(k)}$ (for $i\in G_n^{(m)}(t_k)$) in a single strip and the number of strips is at least $2^{m-1}\gamma_0\HH^1(I)^{-1}$ (see \eqref{To_please_Michal}). Summing all these inequalities, we get:
	\begin{equation}
		|Dg_n|(I\times [0,\gamma_0]) \gtrsim \gamma_0 \frac{6c}{100} 2^m p \HH^1(I)^{-2}.\label{FINAL}
	\end{equation}
	
	As before, it means that $2\|f_n\|_{\BV}\ge \|g_n\|_{\BV}\to\infty$ and we get a contradiction.
	
	\subsection{Proof of Lemma~2}
	It remains only to prove Lemma~2. Recall that this Lemma states that given any $\delta > 0$ we can find sufficiently small $\gamma > 0$ such that \begin{equation*}
			|D f_n|(I\times [-\gamma, \gamma]\setminus I\times \{0\}) < \delta
		\end{equation*}
		for any $n\in\mathbb{N}$.
	Since
		\[
		\begin{split}
			|D f_n|(I\times [-\gamma, \gamma]\setminus I\times \{0\}) &=|D^a f_n|(I\times [-\gamma, \gamma]\setminus I\times \{0\})
			\\&\;+|D^j f_n|(I\times [-\gamma, \gamma]\setminus I\times \{0\})\\&\;+|D^c f_n|(I\times [-\gamma, \gamma]\setminus I\times \{0\})
		\end{split}
		\]
		we can treat the absolutely continuous, jump and Cantor parts of the gradient consecutively.
	
	\subsubsection{Estimate for absolutely continuous parts}
	
	Note that the functions $D^a f_n$ weakly converge to $0$ in $L^1$: indeed, for any $\R^2$-valued function $g\in L^\infty(\R^2;\,\R^2)$ the functional
	$$
	f\mapsto \int_{\R^2} g\cdot D^a f
	$$
	is a bounded linear functional on $\BV$ (the symbol ``$\cdot$'' here means the scalar product in $\R^2$). Weak convergence to $0$ implies that the functions $D^a f_n$ are uniformly integrable and hence there exists $\gamma^a > 0$ such that the following estimate 
		\begin{equation}
			\int_{I\times [-\gamma,\gamma]} |D^a f_n|\, dx \le \delta/10 \label{est_abs_cont}
		\end{equation}  holds for every $0<\gamma\leq\gamma^a$ and $n\in\mathbb{N}$.
	
	\subsubsection{Estimate for jump parts} 
	
	Let us turn to the jump parts. We apply Lemma~\ref{stabilization} once again, this time with the parameter $\delta/20$. We get that there exists a compact set $K$ (a finite union of Lipschitz graphs) such that $|D^j f_n|(\R^2 \setminus K) \leq \delta/20$.  We have:
	\begin{multline}
		|D^j f_n|(I\times [-\gamma, \gamma]\setminus I \times\{0\})\leq |D^j f_n| (\R^2\setminus K) + |D^j f_n| ((I\times [-\gamma, \gamma]\setminus I\times\{0\})\cap K)\\ \leq \delta/20 + |D^j f_n| ((I\times [-\gamma, \gamma]\setminus I\times\{0\})\cap K).
	\end{multline}
	Similarly as in subsection \ref{subs: largeint} the sequence of functions $f_n^+ - f_n^-$ is weakly null in the space $L^1(\HH^1|_K)$. Indeed, for every $h\in L^\infty(\HH^1|_K)$ we can define the linear functional 
		$$
		\psi_h(u)=\int_K (u^+-u^-) h\, d\HH^1
		$$
		on the space $\BV$. Since the sequence $\{f_n\}$ is weakly null in $\BV$, we have $\psi_h(f_n) \to 0$. Thus for every $h\in L^\infty(\HH^1|_K)$
		\[
		\lim_{n\rightarrow \infty} \int_K (f_n^+-f_n^-) h\, d\HH^1 =0.
		\]
		Therefore, the sequence of functions $\{f_n^+ - f_n^-\}$ is uniformly integrable with respect to the measure $\HH^1|_K$. Since the measure $\HH^1$ on $K$ is finite, by choosing small positive values of $\gamma$ we can make the quantity
		$$
		\HH^1((I\times [-\gamma, \gamma]\setminus I\times\{0\})\cap K)
		$$
		arbitrarily small. Then, applying uniform integrability of the sequence $
		\{f_n^+ - f_n^-\}$, we can assure that $$|D^j f_n| ((I\times [-\gamma, \gamma]\setminus I\times\{0\})\cap K) < \delta/20.$$ 
We see now that there exists $\gamma^j > 0$ such that for $0 < \gamma \le \gamma^j$ and every $n\in\mathbb{N}$ we have
\begin{equation}
	|D^j f_n|(I\times [-\gamma, \gamma]\setminus I \times\{0\})\leq \delta/10.\label{est_jump}
\end{equation}

\subsubsection{Estimate for Cantor parts}

Finally, we show how to treat Cantor parts. It is more difficult because we can not apply uniform integrability directly.

Assume that the statement of Lemma is false. Thus, for every $\gamma>0$ there exists $n\in\mathbb{N}$ such that
	\begin{equation}\label{130420241}
		|D f_n|(I\times [-\gamma, \gamma]\setminus I\times \{0\}) \geq \delta.    
	\end{equation}

We put $\gamma_1=\min\{\gamma^a,\gamma^j\}$. For every positive  $\gamma\leq \gamma_1$ the estimates \eqref{est_abs_cont} and \eqref{est_jump} hold.  
Then by \eqref{130420241}, \eqref{est_abs_cont} and  \eqref{est_jump} there exists a number $n_1$ such that  
$$
|D^c f_{n_1}| (I\times [0,\gamma_1]) \ge 8\delta/10.
$$
Now we find such small number $\gamma_2>0$ that
$$
|D^c f_{n_1}| (I\times [0,\gamma_2]) \le \delta/10.
$$
We continue this process and construct in such a way a subsequence $f_{n_k}$ such that
$$
|D^c f_{n_k}| (I\times [0, \gamma_k]) \ge 8\delta/10
$$
and
\begin{equation}
	|D^c f_{n_k}| (I\times [0,\gamma_{k+1}]) \le \delta/10.\label{cantor_small}
\end{equation}
Note that these two conditions imply the following inequality for every $k$
\begin{equation}
	|D^c f_{n_k}| (I\times [\gamma_{k+1}, \gamma_k]) \ge 7\delta/10.\label{cantor_big}
\end{equation}
Now we define a functional on $\BV$ that will lead us to the contradiction. The main idea is similar to the one we used in the proof of Lemma~1, however here we need to be more careful in order to make a correct definition.

Let us introduce the following measure on $\R^2$:
$$
\mu = \lambda + \sum_{k=1}^\infty \frac{1}{2^k} |D f_{n_k}|,
$$
where $\lambda$ is a Lebesgue measure on $\R^2$. It is a positive finite measure on $\R^2$. Besides that, all gradients of functions $f_n$ are absolutely continuous with respect to this measure which means that there exist such measurable functions $g_{n_k}^a$, $g_{n_k}^c$ and $g_{n_k}^j$ (all of them $\R^2$-valued) that
$$
D^a f_{n_k} = g_{n_k}^a d\mu, \quad D^c f_{n_k} = g_{n_k}^c d\mu,\quad D^j f_{n_k} = g_{n_k}^j d\mu.
$$
%Also, the measures $f_{n_k}\, d\lambda$ are also absolutely continuous with respect to $\mu$, which means that there exist scalar-valued functions $g_{n_k}$ such that
%$$
%f_{n_k}d\lambda = g_{n_k} d\mu.
%$$
Therefore, we can naturally identify each function $f_{n_k}\in \BV$ with an element of the space $L^1(\mu; \R^2)\oplus L^1(\lambda)$; this identification is given by the following map:
$$
f_{n_k}\mapsto (g_{n_k}^a + g_{n_k}^c + g_{n_k}^j, f_{n_k}).
$$
We can extend this map to the whole closed linear span of the sequence $\{f_{n_k}\}$ in $\BV$; besides that, this map is an isometry. In this way we can view $\overline{\mathrm{span}}\{f_{n_k}\}$ as a closed subspace of $L^1(\mu; \R^2)\oplus L^1(\lambda)$ and any pair $(h, \widetilde{h})\in L^\infty(\mu; \R^2)\oplus L^\infty(\lambda)$ defines a functional on $\overline{\mathrm{span}}\{f_{n_k}\}$. By the Hahn--Banach theorem such functional can be extened to the whole space $\BV$.  We will put $\widetilde{h} = 0$ and construct $h$ as follows. 
%Then we restrict this functional to $\mathrm{span}\{f_{n_k}\}$. Subsequently we extend this functional to the whole space $\BV$ using Hahn--Banach theorem. In this way we will provide a functional which will lead to a contradiction with weak convergence to $0$.}

We will construct a function $h\in L^\infty (\mu; \R^2)$ of norm at most $1$, supported on the set $I\times [0, \gamma_1]$. For $k=1, 2, \ldots$ we put
\[h_k (x) =\left\{
\begin{array}{cc} \frac{g_{n_k}^c(x)}{|g_{n_k}^c(x)|}, & x\in  I\times [\gamma_{k+1}, \gamma_k)\cap\supp(g_{n_k}^c);\\
	0,&\mbox{ otherwise.} \end{array}
\right.
\] 

Clearly, for any $k \ge 1$, since the measures $D^a f_{n_k}$, $D^c f_{n_k}$ and $D^j f_{n_k}$ are mutually singular, the functions $g_{n_k}^a$, $g_{n_k}^c$ and $g_{n_k}^j$ have pairwise disjoint supports. Then by \eqref{cantor_big} we have for $k\ge 1$:
\begin{equation}\label{eq: 070620242}
	\int_{I\times [\gamma_{k+1},\gamma_k)} (g_{n_k}^a + g_{n_k}^c + g_{n_k}^j)\cdot h_k \, d\mu=\int_{I\times [\gamma_{k+1}, \gamma_k)} |g_{n_k}^c|\, d\mu \ge 7\delta/10.
\end{equation}
We put $\eps_1 = 1$ and inductively choose the signs $\eps_k=\pm 1$ for $k\ge 2$ so that
\begin{equation}\label{eq: 070620241}
	\operatorname{sign}\int_{I\times [\gamma_{k+1},\gamma_k)} (g_{n_k}^a + g_{n_k}^c + g_{n_k}^j)\cdot \eps_k h_k \, d\mu =\operatorname{sign}\int_{I\times [\gamma_{k},\gamma_1)} (g_{n_k}^a + g_{n_k}^c + g_{n_k}^j)\cdot \sum_{j=1}^{k-1}\eps_j h_j \, d\mu
\end{equation}
Now we define $
h=\eps_k h_k$ on $I\times [\gamma_{k+1}, \gamma_k)$ for every $k\in\mathbb{N}$.  By \eqref{eq: 070620242} and \eqref{eq: 070620241} we get
$$
\big|\int_{I\times [\gamma_{k+1},\gamma_1)} (g_{n_k}^a + g_{n_k}^c + g_{n_k}^j)\cdot h \, d\mu\big| \ge \int_{I\times [\gamma_{k+1},\gamma_k)} (g_{n_k}^a + g_{n_k}^c + g_{n_k}^j)\cdot h_k \, d\mu \ge 7\delta/10.
$$

%Besides that, if we put $h=h_1$ on $I\times [\gamma_2, \gamma_1)$ and define
By triangle inequality, for any $k\ge 1$ we have
\begin{multline*}
	\Big|\int_{\R^2} (g_{n_k}^a + g_{n_k}^c + g_{n_k}^j) \cdot h\, d\mu\Big| \ge \Big|\int_{I\times [\gamma_{k+1},\gamma_1)} (g_{n_k}^a + g_{n_k}^c + g_{n_k}^j)\cdot h\, d\mu \Big| \\ 
	- \Big( \int_{I\times [0,\gamma_{k+1})} |g_{n_k}^a| |h|\, d\mu + \int_{I\times [0,\gamma_{k+1})} |g_{n_k}^c| |h|\, d\mu + \int_{I\times [0,\gamma_{k+1})} |g_{n_k}^j| |h|\, d\mu \Big).
\end{multline*}
Each of three summands on the second line here do not exceed $\delta/10$: this is guaranteed by inequalities \eqref{est_abs_cont}, \eqref{est_jump}, \eqref{cantor_small} and the fact that $|h|\le 1$ $\mu$-almost everywhere. Summarizing, for $k\ge 1$
$$
\Big|\int_{\R^2} (g_{n_k}^a + g_{n_k}^c + g_{n_k}^j) \cdot h\, d\mu\Big|\ge 4\delta/10.
$$

As we mentioned before, we can extend the functional given by the function $h$ to the whole space $\BV$. Thus, we get a functional $\phi\in\BV^*$ such that $|\langle f_{n_k}, \phi \rangle|\ge 4\delta/10$. This contradicts the weak convergence, and the Lemma is proved.

\section{General case: passing from an interval to Lipschitz graphs and proof in higher dimensions}

\subsection{Lipschitz graphs}

In the previous section we presented the proof in a special case when a Lipschitz graph is replaced by a single interval. Now we address this issue. It is standard: the main idea is simply to ``straighten'' the Lipschitz graph; we will present some details here.

After applying Lemma~1 we obtain a finite number of Lipschitz graphs $\{\Gamma_k\}_{k=1}^N$ such that
$$
|D^j f_n|\Big( \bigcup_{k=1}^N \Gamma_k \Big)\ge 1-\eps.
$$
It means that there exists a number $l\in \{1, 2, \ldots, N\}$ and a subsequence  $f_{n_k}$ such that
$$
|D^j f_{n_k}| (\Gamma_l)\ge \frac{1}{2N}.
$$

Put $W=(0,1)\times(-1,1)$. Since $\Gamma_l$ is a compact Lipschitz graph, there exists a neighborhood $U$ of $\Gamma_l$ and a bijective bi-Lipschitz function $\varphi:\overline{W}\to \overline{U}$ such that $\varphi(I\times\{0\})=\Gamma_l$ where $I\subset [0,1]$ is an interval. We put 
$$
V=\varphi(I\times(-\tau, \tau))
$$ 
for some $\tau<1$.

Let $\Psi$ be a smooth function on $\mathbb{R}^2$ which is equal to $1$ on $V$ and to $0$ on $\R^2\setminus U$. The multiplication operator $M_\Psi:BV(\R^2)\to \BV(U)$ given by the formula $M_\Psi(f)=\Psi f$ is bounded and hence weakly continuous. Therefore, the sequence $\widetilde{f}_{n_k}=M_\Psi f_{n_k}$ converges weakly to $0$ in $\BV(U)$.

Now consider the operator $T_\varphi: \BV(U)\to \BV(W)$ given by the formula $T_{\varphi}(f)=f\circ\varphi$ (strictly speaking, this is a composition with $\varphi|_{W}$). This is indeed a bounded operator (see \cite[Theorem 3.16]{AFP2000}). Therefore, the sequence of functions $\dbtilde{f}_{n_k}=T_\varphi \widetilde{f}_{n_k}$ converges weakly to $0$ in $\BV(W)$. Besides that, by \cite[Theorem 3.16]{AFP2000} (since $\varphi$ is bi-Lipschitz) there exists a constant $c>0$ such that $|D\dbtilde{f}_{n_k}|(I\times\{0\})\ge c$. Obviously, since the set $I\times\{0\}$ has Hausdorff dimension 1, we have $|D\dbtilde{f}_{n_k}|(I\times\{0\})=|D^j\dbtilde{f}_{n_k}|(I\times\{0\})$. %Note that we can treat $D^j \dbtilde{f}_{n_k}$ as functions in $L^1(I\times\{0\})$ putting them equal to $0$ outside $J_{\dbtilde{f}_{n_k}}$. 
Now we may repeat the proof from the previous section and get a contradiction.

\subsection{Higher dimensions}

Let us now describe how to modify the proof for the space $\BV(\R^d)$. As we mentioned, it is almost the same: we mainly need to modify the constants in the proof for the 2-dimensional case. In the two dimensional case the proof consists of two steps: the interval case and "passing to the general" case. In higher dimensions the proof also consists of these two steps. The second step is verbatim the same. Now we indicate the modifications in the first step.

First of all, we consider a $(d-1)$-dimensional cube $I$ with side length $\ell(I)$ instead of an interval. For every $m$ we divide the cube $I$ into $2^{m(d-1)}$ equal dyadic subcubes (analogue of the family $\mathcal{D}_{m}$). By the same arguments as above we get that the analogue of the formula \eqref{good_int} will be $\# S_n^{(m)}(t) \ge 2^{m(d-1)+1}p\HH^{d-1}(I)^{-1}.$ Next, the inequalities \eqref{ABS_ONE} and \eqref{ABS_TWO} in $d$ dimensions get the form: 
\begin{align*}
\int_{Q_{i,m}^{(k)}} |g_n(x,t)|\, dx\, dt \geq \frac{12c}{100}2^{-md}\ell(I);\\
\Big| \int_{Q_{i,m}^{(k)}} g_n(x,t)\, dx\, dt \Big|\leq \frac{6c}{100}2^{-md}\ell(I).
\end{align*}

Since the constant in Poincar\`{e} inequality on a $d$-dimensional parallelepiped $Q$ with balanced sides is comparable to $\operatorname{diam}(Q)$ (it follows from a simple scaling argument), the inequality \eqref{POINCARE} takes the following form:
\begin{multline*}
|Dg_n|(Q_{i,m}^{(k)})\gtrsim 2^m\ell(I)^{-1} \int_{Q_{i,m}^{(k)}} \Big|g_n(x,t)-\fint_{Q_{i,m}^{(k)}}g_n\Big|\,dx\,dt \\ \ge  2^m\ell(I)^{-1}\Big( \int_{Q_{i,m}^{(k)}} |g_n(x,t)|\,dx\,dt - \Big| \int_{Q_{i,m}^{(k)}} g_n(x,t)\,dx\,dt \Big| \Big) \geq\frac{6c}{100}2^{-m(d-1)}.
\end{multline*}

Since this inequality holds now for at least $2^{m(d-1)} p\HH^{d-1}(I)^{-1}$ parallelepipeds in one strip and the number of strips is (comparable to) $2^m\gamma_0\ell(I)^{-1}$, we get the following analogue of the final estimate \eqref{FINAL}:
\begin{equation*}
|Dg_n|(I\times [0,\gamma_0]) \gtrsim \gamma_0 \frac{6c}{100} 2^m p \, \ell(I)^{-d}.
\end{equation*}
As before, this yields a contradiction.

Finally, passing to the general case of Lipschitz graphs is the same.

\bibliographystyle{plain}
\bibliography{schur.bib}
\end{document}